\definecolor{darkblue}{RGB}{0,0,160}
\newtheorem{thm}{Theorem}[section]
\newtheorem{lemma}[thm]{Lemma}
\newtheorem{cor}[thm]{Corollary}
\newtheorem{prop}[thm]{Proposition}
\theoremstyle{definition}
\newtheorem{remark}[thm]{Remark}
\newtheorem{defn}[thm]{Definition}
\numberwithin{equation}{section}
\newcommand{\ring}[1]{\ensuremath{\mathbb{#1}}}
\newcommand\NN{\ring{N}}
\newcommand\QQ{\ring{Q}}
\newcommand\RR{\ring{R}}
\newcommand\ZZ{\ring{Z}}
\newcommand\cD{{\mathcal D}}
\newcommand\cN{{\mathcal N}}
\newcommand\cL{{\mathcal L}}
\newcommand\cM{{\mathcal M}}
\newcommand\cG{{\mathcal G}}
\newcommand\cP{{\mathcal P}}
\DeclareMathOperator\rank{rank} 
\newcommand{\inD}[1][\relax]{\def\argone{#1}\def\temprelax{\relax}
  \ifx\argone\temprelax\right.\else\,\middle|#1\right.{}\fi}
\newcommand{\fiberGraph}[2]{{#1}(#2)}
\newcommand{\diffGraph}[2]{\cG^{#1}_{#2}}
\newcommand{\fdim}[1]{\mathrm{fdim}(#1)}
\newcommand{\conv}[1]{\mathrm{conv}_{\QQ}(#1)}
\newcommand{\vertices}[1]{V(#1)}
\newcommand{\edges}[1]{E(#1)}
\begin{document}

\title{The fiber dimension of a graph}

\author{Tobias Windisch}
\address{Otto-von-Guericke Universität\\ Magdeburg, Germany} 
\email{windisch@ovgu.de}

\date{\today}

\makeatletter
  \@namedef{subjclassname@2010}{\textup{2010} Mathematics Subject Classification}
\makeatother

\subjclass[2010]{Primary: 05C62, 52B20 Secondary: 05C70}

\keywords{Graph embedding, graph dimension, fiber graphs, lattice polytopes}

\begin{abstract}
Graphs on integer points of polytopes whose edges come from a set of
allowed differences are studied. It is shown that any simple graph can be
embedded in that way. The minimal dimension of such a representation
is the fiber dimension of the given graph.  The fiber dimension is
determined for various  classes of graphs and an upper bound in terms
of the chromatic number is stated.
\end{abstract}

\maketitle

\section{Introduction}

The study of geometric properties of graphs is a key ingredient in
understanding their algorithmic behaviour and combinatorial
structure~\cite{Thomassen1994,Lovasz2002}. In~\cite{Erdos1965}, the
dimension of a graph was introduced, which is the smallest $n\in\NN$
such that the graph can be embedded in $\RR^n$ with every edge having
unit length. 
Recently, isometrically embeddings of graphs into discrete objects
like hypercubes or lattices were paid a lot of attention in the
literature and lead to a lot of variations like the isometric
dimension~\cite{Fitzpatrick2000}, the lattice
dimension~\cite{Eppstein2005,Imrich2009}, or the Fibonacci
dimension~\cite{Cabello2011} of a graph.

In this paper, a new notion is added to the list of graph dimensions
(Remark~\ref{r:Dimension}).  Our concept has its origin in
\emph{algebraic statistics}, an emerging field which explores
statistical questions with algebraic
tools~\cite{Diaconis1998a,Drton2008,Hara2010,Aoki2012}. A main task
there is to construct connected graphs on integer points of polytopes
in order to draw samples by performing a random
walk~\cite[Chapter~5]{Sturmfels1996}. For a given polytope
$P\subset\QQ^d$ and a symmetric set $\cM\subset\ZZ^d$, a graph on
$P\cap\ZZ^d$ is given by connecting two nodes $u$ and $v$ by an edge
if $u-v\in\cM$. Graphs which can be obtained in that way are often
referred to as \emph{fiber graphs} in the literature and can be
understood as a discrete analogue of unit distance
graphs~\cite{Buckley1988}.

At first glance, it seems that fiber graphs are distinguished graphs
with their own rich structure. However, as it turns out, every graph
can be represented as a fiber graph
(Proposition~\ref{p:EveryGraphIsFiberGraph}). This motivates the
question for the smallest dimension in which a graph $G$ can be
represented as a fiber graph, the \emph{fiber dimension} of $G$
(Definition~\ref{d:FiberDimension}). We explore general properties of
this dimension and state upper bounds in terms of the chromatic number
(Theorem~\ref{t:ChromaticBound}) in the spirit of~\cite{Erdos1965}. We
then determine the fiber dimension for a variety of graphs. The fiber
dimension of a cycle of length $n$ depends on Euler's totient function
and we show that $\fdim{C_n}=1$ if and only if
$n\in\NN\setminus\{3,4,6\}$. Cycles whose length is one of the
exceptional cases $n\in\{3,4,6\}$ have fiber dimension $2$
(Proposition~\ref{p:Cycles}). Its proof uses the well-known fact that
Euler's totient function of $n\in\NN$ is $2$ if and only if
$n\in\{3,4,6\}$.  We also determine the fiber dimension of complete
graphs and show that it is logarithmic in the number of nodes
(Theorem~\ref{t:CompleteGraphs}). In the end, a connection to
\emph{distinct pair-sum polytopes}~\cite{Choi2002} is established and
it is shown how the fiber dimension leads to relations between the
number of vertices and the dimension of the ambient space of these
polytopes.

\subsection*{Conventions and Notations}
The natural numbers are denoted by $\NN=\{0,1,2,\dots\}$ and for
$n\in\NN$ with $n>0$, $[n]:=\{1,2,\dots,n\}$. All graphs
that appear in this paper are \emph{simple}, i.e. all edges are
undirected and they do not have loops or multiple edges. For any graph
$G$, the set of nodes is denoted by $\vertices{G}$ and its
\emph{chromatic number} is $\chi(G)$. The unit vectors of $\QQ^d$ are
denoted by $e_1,\dots,e_d$. For any $n\in\NN$, $K_n$ and $C_n$ denote
the complete graph and the cycle graph on $n$ nodes respectively.

\subsection*{Acknowledgements}
The author is thankful to Thomas Kahle for his comments on this
manuscript and to Benjamin Nill for pointing him to
\cite{Choi2002}. He gratefully acknowledges the support received
from the \href{https://www.studienstiftung.de}{German National
Academic Foundation}. 

\section{Fiber graphs}

A polytope $P\subset\QQ^d$ is a \emph{lattice polytope} if all its
vertices are in $\ZZ^d$. A finite set $\cM\subset\ZZ^d\setminus\{0\}$
is a \emph{set of moves} if $\cM=-\cM$ and if for all $\lambda\in\NN$
with $\lambda\ge 2$ and all $m\in\cM$, $\lambda\cdot m\not\in\cM$. 

\begin{defn}\label{d:FiberGraph}
Let $P\subset\QQ^{d}$ be a lattice polytope and let
$\cM\subset\ZZ^{d}$ be a set of moves. The \emph{fiber graph}
$\fiberGraph{P}{\cM}$ is the graph on $P\cap\ZZ^d$ where two nodes $v$
and $u$ are adjacent if $u-v\in\cM$. The set of moves $\cM$ is
\emph{minimal} for $P$ if every move in $\cM$ contributes an edge.
A minimal set of moves $\cM$ is a \emph{Markov basis} of $P$ if
$\fiberGraph{P}{\cM}$ is connected.
\end{defn}

\begin{remark}\label{r:AlgebraicStatistics}
The notions \emph{Markov bases} and \emph{fiber graphs} come from algebraic
statistics~\cite{Drton2008}. The goal there is to run irreducible
Markov chains on \emph{fibers} of a Matrix $A\in\ZZ^{m\times
d}$, i.e. the sets $A^{-1}b:=\{u\in\NN^d: Au=b\}$ for $b\in\ZZ^m$.
With tools from commutative algebra~\cite{Diaconis1998a,Drton2008}, a
universal set of moves $\cM\subset\ker_\ZZ(A)$ can be computed such that the
fiber graphs on all fibers of $A$ are connected simultaneously.
\end{remark}

\begin{figure}[h!]
	\begin{tikzpicture}[xscale=0.5,yscale=0.5]
		\node [fill, circle, inner sep=1.5pt](b2) at (1,2) {};
		\node [fill, circle, inner sep=1.5pt](b2) at (2,2) {};
		\node [fill, circle, inner sep=1.5pt](b2) at (3,1) {};
		\node [fill, circle, inner sep=1.5pt](b2) at (3,2) {};
		\node [fill, circle, inner sep=1.5pt](b2) at (4,1) {};
		\node [fill, circle, inner sep=1.5pt](b2) at (4,2) {};
		\node [fill, circle, inner sep=1.5pt](b2) at (5,1) {};
		\node [fill, circle, inner sep=1.5pt](b2) at (5,2) {};
		\node [fill, circle, inner sep=1.5pt](b2) at (5,3) {};
		\node [fill, circle, inner sep=1.5pt](b2) at (6,2) {};
		\node [fill, circle, inner sep=1.5pt](b2) at (6,3) {};
		\node [fill, circle, inner sep=1.5pt](b2) at (7,3) {};
		\node [fill, circle, inner sep=1.5pt](b2) at (7,4) {};
		\node [fill, circle, inner sep=1.5pt](b2) at (4,3) {};
         
         \draw[thick,->](0,0) -- (0,5.5);
         \draw[thick,->](0,0) -- (8.5,0);

      \foreach \X in {0,1,2,3,4,5,6,7,8}{
         \draw[dotted](\X,0) -- (\X,5);
      }

      \foreach \Y in {0,1,2,3,4,5}{
         \draw[dotted](0,\Y) -- (8,\Y);
      }

         \draw(2,2) -- (3,1);
         \draw(3,2) -- (4,1);
         \draw(4,2) -- (5,1);

         \draw(4,3) -- (5,2);
         \draw(5,3) -- (6,2);

          \draw[bend angle=25, bend left](1,2) to (3,2);
          \draw[bend angle=25, bend right](2,2) to (4,2);
          \draw[bend angle=25, bend left](3,2) to (5,2);
          \draw[bend angle=25, bend right](4,2) to (6,2);

          \draw[bend angle=25, bend right](3,1) to (5,1);

          \draw[bend angle=25, bend left](4,3) to (6,3);
          \draw[bend angle=25, bend right](5,3) to (7,3);

          \draw[bend angle=25, bend right](7,4) to (1,2);
          \draw[bend angle=25, bend right](7,4) to (2,2);
          \draw [fill=gray, opacity=0.25] (1,2) --(7,4) -- (7,3) --
(5,1) --(3,1) --cycle; 

	\end{tikzpicture}
	\caption{\label{f:Fibergraph}A fiber graph in $\QQ^2$.}
\end{figure}

\begin{prop}\label{p:EveryGraphIsFiberGraph}
Every simple graph is isomorphic to a fiber graph.
\end{prop}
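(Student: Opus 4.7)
The plan is to give an explicit embedding. Let $G$ be a simple graph with vertex set $\vertices{G}=\{v_1,\dots,v_n\}$. Set $d=n$ and send $v_i$ to the $i$-th unit vector $e_i\in\ZZ^d$. Take $P$ to be the lattice polytope $\conv{e_1,\dots,e_n}$ (the standard simplex in $\QQ^d$), and define
\[
\cM\defas\{e_i-e_j : \{v_i,v_j\}\in\edges{G}\}.
\]
I will then verify three things: that $P\cap\ZZ^d=\{e_1,\dots,e_n\}$, that $\cM$ is a valid set of moves, and that the resulting fiber graph is isomorphic to $G$.

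For the first point, any point in $P$ has the form $\sum\lambda_i e_i$ with $\lambda_i\ge 0$ and $\sum\lambda_i=1$; its $i$-th coordinate is $\lambda_i$, so if all coordinates are integers, then exactly one $\lambda_i$ equals $1$ and the rest vanish. Hence $P\cap\ZZ^d=\{e_1,\dots,e_n\}$. For the second point, $\cM=-\cM$ holds because $G$ is undirected, $\cM\subset\ZZ^d\setminus\{0\}$ because $G$ is loopless (so $i\neq j$), and $\cM$ is finite because $G$ is. The no-multiples condition is immediate: every entry of $e_i-e_j$ lies in $\{-1,0,1\}$, so $\lambda(e_i-e_j)$ cannot belong to $\cM$ for any integer $\lambda\ge 2$.

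For the third point, by definition $e_i$ and $e_j$ are adjacent in $\fiberGraph{P}{\cM}$ iff $e_i-e_j\in\cM$, which by construction happens iff $\{v_i,v_j\}\in\edges{G}$. Therefore the map $v_i\mapsto e_i$ is a graph isomorphism from $G$ to $\fiberGraph{P}{\cM}$.

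I do not expect a real obstacle here; the only subtlety is making sure that extraneous integer points of the ambient polytope do not introduce unwanted edges, and this is handled automatically by choosing $P$ to be the convex hull of the unit vectors. This construction is extremely wasteful, which is exactly what motivates the ensuing definition of the fiber dimension.
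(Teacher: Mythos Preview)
Your proof is correct and takes essentially the same approach as the paper: embed the vertices as the unit vectors in the standard $(n-1)$-simplex in $\QQ^n$ and let $\cM=\{e_i-e_j:\{v_i,v_j\}\in\edges{G}\}$. You simply flesh out the verifications (that $P\cap\ZZ^n=\{e_1,\dots,e_n\}$, that $\cM$ is a valid set of moves, and that the edge correspondence is a bijection) that the paper leaves implicit.
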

\begin{proof}
Let $G=(\{v_1,\ldots,v_n\},E)$ be graph and let
$P:=\{x\in\QQ^n_{\ge 0}:
\sum_{i=1}^nx_i=1\}$ be the
$(n-1)$-dimensional simplex, then $P\cap\ZZ^n=\{e_1,\ldots,e_n\}$.
Consider $\cM:=\{e_i-e_j: \{v_i,v_j\}\in E\}$, then
$\fiberGraph{P}{\cM}$ is isomorphic to $G$. 
\end{proof}

The restriction on graphs without loops is necessary, since, in a
fiber graph, either every node has a loop, or none.  The next lemma
states that every fiber graph can be written as a fiber graph in a
full dimensional polytope.

\begin{lemma}\label{l:FullDim}
Let $P\subset\QQ^m$ be a $d$-dimensional polytope and 
$\cM\subset\ZZ^m$ a set of moves. There exists a
$d$-dimensional polytope $P'\subset\QQ^d$ and moves
$\cM'\subset\ZZ^d$ such that~$\fiberGraph{P}{\cM}\cong\fiberGraph{P'}{\cM'}$.
\end{lemma}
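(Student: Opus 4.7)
The plan is to exhibit an affine lattice isomorphism from the affine hull of $P$ onto $\QQ^d$ and transport the moves through its linear part. Since $P$ is a lattice polytope, fix a vertex $p_0 \in P \cap \ZZ^m$ and consider the translate $P - p_0$, whose affine hull is a $d$-dimensional $\QQ$-linear subspace $V \subset \QQ^m$. The key lattice-theoretic input is that $L := V \cap \ZZ^m$ is a free abelian group of rank exactly $d$: any $v \in V$ has rational coordinates, so some positive integer multiple $Nv$ lies in $L$, which shows that $L$ spans $V$ over $\QQ$ and hence has rank $\dim V = d$.

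Next I pick a $\ZZ$-basis $b_1, \dots, b_d$ of $L$ and let $\phi : V \to \QQ^d$ be the $\QQ$-linear extension of $b_i \mapsto e_i$; by construction $\phi$ restricts to a group isomorphism $L \to \ZZ^d$. Define $\Phi : p_0 + V \to \QQ^d$ by $\Phi(x) := \phi(x - p_0)$, and set $P' := \Phi(P)$ and $\cM' := \phi(\cM \cap L)$. Then $P'$ is a $d$-dimensional polytope; moreover, for any $u,v \in P \cap \ZZ^m$ one has $u - v \in V \cap \ZZ^m = L$, so $P \cap \ZZ^m \subset p_0 + L$ and $\Phi$ restricts to a bijection $P \cap \ZZ^m \to P' \cap \ZZ^d$. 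In particular the vertices of $P'$ lie in $\ZZ^d$, so $P'$ is a lattice polytope.

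To finish, I observe that removing moves outside $L$ does not alter $\fiberGraph{P}{\cM}$, since the difference of any two lattice points of $P$ lies in $L$. The properties of being a set of moves (symmetry, omission of $0$, and no proper integer multiple of another move) transfer across the group isomorphism $\phi|_L$, so $\cM' \subset \ZZ^d$ is a set of moves; and the identity $\Phi(u) - \Phi(v) = \phi(u-v)$ shows that $u - v \in \cM \cap L$ if and only if $\Phi(u) - \Phi(v) \in \cM'$, which yields $\fiberGraph{P}{\cM} \cong \fiberGraph{P'}{\cM'}$. The main obstacle is the rank claim for $L$; without it one could only embed into a lattice of potentially larger rank than $d$, and the target polytope would not sit in $\QQ^d$.
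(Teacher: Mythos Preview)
Your proof is correct and takes a genuinely different route from the paper's. The paper proceeds algorithmically: it first translates $P$ into the nonnegative orthant, introduces slack variables to rewrite $P$ as the set of nonnegative solutions of a linear system $Bx=b$, and then uses the Hermite normal form of $B$ (together with the unimodularity of the transformation matrix) to build an explicit affine bijection $\psi:\ZZ^d\to\ZZ^k$ whose image contains the lattice points of $P$; the moves $\cM'$ are then defined by pulling back pairwise differences through~$\psi$. You bypass all of this by working intrinsically with the lattice $L=V\cap\ZZ^m$ in the affine hull of $P-p_0$, invoking the elementary fact that a subgroup of $\ZZ^m$ spanning a $d$-dimensional rational subspace is free of rank~$d$, and transporting everything along a chosen $\ZZ$-basis of~$L$.

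What each approach buys: the paper's Hermite-normal-form construction is fully explicit and stays within the matrix-and-fiber language of algebraic statistics, which fits the surrounding narrative (cf.\ Remark~\ref{r:AlgebraicStatistics}). Your argument is shorter and conceptually cleaner, since it isolates the single structural fact---the rank of $L$---that drives the result and avoids the detour through slack variables and the reduction to polytopes of the form $\{x\ge 0:\ Bx=b\}$. Both arrive at the same conclusion; yours is the more economical proof of the lemma as stated.
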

\begin{proof}
We can assume that $d<m$. Translation of $P$ does not change the graph
structure of $\fiberGraph{P}{\cM}$ and thus we can assume that
$P\subset\QQ_{\ge 0}^m$. Since $P$ is a rational polytope and since
$d<m$, there exists a matrix $A\in\ZZ^{n\times m}$ with
$\dim\ker_\ZZ(A)=0$ and $n\ge m$ and 
$b\in\ZZ^n$ such that $P=\{x\in\QQ_{\ge 0}^m: Ax\le b\}$. Consider the
injective and affine map
\begin{equation*}
\phi: \QQ^m\to\QQ^{m+n}, x\mapsto\begin{pmatrix}x\\ b-Ax\end{pmatrix}.
\end{equation*}
Then $\phi(P)=\{(x,y)^T\in\QQ_{\ge 0}^{m+n}: Ax+y=b\}$ and
$\dim(\phi(P))=\dim(P)=d$. The set $\cN:=\{(m,Am)^T: m\in\cM\}$ is a
set of moves and the graphs $\fiberGraph{\phi(P)}{\cN}$ and
$\fiberGraph{P}{\cM}$ are isomorphic.  Hence, it suffices to show the
statement for $d$-dimensional polytopes of the form $P=\{x\in\QQ_{\ge
0}^k: Bx=b\}$ for some $b\in\ZZ^n$, a matrix $B\in\ZZ^{n\times k}$,
and a set of moves $\cM\subset\ker_\ZZ(B)$ with the property that
$k\ge n$, $\rank(B)=n$, and $\dim(\ker_\ZZ B )=k-n\ge d$. We can add
rows to $B$ without changing $P$ such that $\dim(\ker_\ZZ B)=k-n= d$.
First, we transform $B$ into its Hermite normal
form, that is, we write $B=(H,0)\cdot C$ for an unimodular matrix
$C\in\ZZ^{k\times k}$ and a matrix $H\in\ZZ^{n\times n}$ of full rank.
Let $H^{-1}\in\QQ^{n\times n}$ and $C^{-1}\in\QQ^{k\times k}$ be the
inverse matrices of $H$ and $C$ respectively. Since $C$ is
unimodular, $C^{-1}\in\ZZ^{k\times k}$ and thus let
$C_1\in\ZZ^{k\times n}$ and $C_2\in\ZZ^{k\times d}$ such that
$C^{-1}=(C_1, C_2)$ and consider the affine map
\begin{equation*}
\psi:\ZZ^d\to\ZZ^k, x\mapsto
C^{-1}\begin{pmatrix}H^{-1}b\\x\end{pmatrix}.
\end{equation*}
Clearly, $\psi$ is
injective and the image of the polytope $P':=\{v\in\QQ^d:
C_2\cdot v\le C_1 H^{-1}b\}\subset\QQ^{d}$ is $P$. Since
$P\cap\ZZ^k\neq\emptyset$, $H^{-1}b\in\ZZ^n$ (see
\cite[Theorem~2.3.6]{Loera2013}) and since $C$ is unimodular, integer
points of $P'$ get mapped to integer points of $P$. Thus
$\dim(P')=\dim(P)=d$. That is,
$P'$ is full dimensional in $\QQ^d$. Consider
\begin{equation*}
\cM':=\{\psi^{-1}(v)-\psi^{-1}(u): v,u\in P\cap\ZZ^k, v-u\in\cM\},
\end{equation*}
then $\cM'=-\cM'$ and $\cM'$ cannot contain multiples.
Let $\psi(v')=v$ and $\psi(u')=u$ for $v',u'\in P'\cap\ZZ^d$, then
$v'-u'\in\cM'$ if and only if $v-u\in\cM$.  Thus, all edges in
$\fiberGraph{P'}{\cM'}$ are mapped bijective to edges in
$\fiberGraph{P}{\cM}$ under $\psi$, which proves that these graphs are
isomorphic.
\end{proof}

\begin{defn}\label{d:FiberDimension}
Let $G$ be a graph. The \emph{fiber dimension} $\fdim{G}$ of $G$ is the smallest
$d\in\NN$ such that there exists a full dimensional lattice polytope
$P\subset\QQ^{d}_{\ge 0}$ and a set of moves $\cM\subset\ZZ^d$ with
$G\cong\fiberGraph{P}{\cM}$. 
\end{defn}

\begin{remark}\label{r:Dimension}
In general, the fiber dimension of a graph $G$ is different from its
\emph{dimension} $\dim(G)$ as defined in~\cite{Erdos1965}.
For example, the complete graph $K_5$ can be realized as fiber graph in
$\QQ^3$ (see Theorem~\ref{t:CompleteGraphs} and
Figure~\ref{f:CompleteGraph}), in contrast to $\dim(K_5)=4$.
\end{remark}

\begin{remark}\label{r:UpperBoundOnfdim}
Proposition~\ref{p:EveryGraphIsFiberGraph} and Lemma~\ref{l:FullDim}
imply that the fiber dimension of any graph with $n$ nodes is bounded
from above by $n-1$. All graphs with at most one vertex have fiber
dimension $0$ and all graphs on at least two nodes without edges have
fiber dimension~$1$.
\end{remark}

\begin{remark}\label{r:ASMarkovBasis}
Let $P$, $\cM$, and $G$ as in the proof of
Proposition~\ref{p:EveryGraphIsFiberGraph} and consider the integer
matrix $A=(1,\dots,1)\in\ZZ^{1\times n}$. If $G$ is connected, than it
is easy to show that $\cM$ is a Markov basis for all polytopes of the
form $\{u\in\QQ^n_{\ge 0}: Au=b\}$ with $b\in\QQ$. In particular,
$\cM$ is a Markov basis of the matrix $A$ in the sense
of~\cite[Definition~1.1.12]{Drton2008} (see also
Remark~\ref{r:AlgebraicStatistics}).
\end{remark}

\begin{prop}\label{p:LinearIndependentMoves}
Let $P\subset\QQ^d$ be a lattice polytope and $\cM\subset\ZZ^d$ be a
Markov basis of $P$ with $2\cdot\dim(P)=|\cM|$, then
$\fiberGraph{P}{\cM}$ is bipartite.
\end{prop}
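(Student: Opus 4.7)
The plan is to construct an explicit proper 2-coloring of $\fiberGraph{P}{\cM}$. First I would use Lemma~\ref{l:FullDim} to reduce to the case in which $P\subset\QQ^d$ is full-dimensional, so the hypothesis becomes $|\cM|=2d$. Since $\cM=-\cM$ and $\cM$ contains no proper integer multiples, it splits as a disjoint union $\cM=\{\pm m_1,\ldots,\pm m_d\}$ of $d$ antipodal pairs.

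The key step, which I expect to be the main obstacle, is to show that $m_1,\ldots,m_d$ are $\QQ$-linearly independent. Since $P$ is a $d$-dimensional lattice polytope, it contains $d+1$ affinely independent lattice points $v_0,\ldots,v_d$, so that $v_1-v_0,\ldots,v_d-v_0$ form a $\QQ$-basis of $\QQ^d$. Because $\cM$ is a Markov basis, $\fiberGraph{P}{\cM}$ is connected, and each difference $v_i-v_0$ is an integer combination of elements of $\cM$. Hence $\cM$ spans $\QQ^d$; since only $d$ of its elements are pairwise non-antipodal, $m_1,\ldots,m_d$ must then be linearly independent, i.e.\ they form a basis of $\QQ^d$.

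With the basis in hand, I would fix a base vertex $v_0\in P\cap\ZZ^d$ and define $\phi: P\cap\ZZ^d\to\ZZ/2\ZZ$ by setting $\phi(v)$ to be the length modulo $2$ of any path from $v_0$ to $v$ in $\fiberGraph{P}{\cM}$. Well-definedness reduces to checking that every closed walk has even length: writing consecutive differences along a closed walk as $\epsilon_j m_{i_j}$ with $\epsilon_j\in\{\pm 1\}$, the relation $\sum_{j=1}^k \epsilon_j m_{i_j}=0$ combined with linear independence of the $m_i$ forces $\sum_{j:\,i_j=i}\epsilon_j=0$ for each $i$; in particular the number of $j$ with $i_j=i$ is even, and summing over $i$ shows that $k$ is even.

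Finally, adjacent vertices differ by a single move and therefore receive $\phi$-values differing by one in $\ZZ/2\ZZ$, so $\phi$ is a proper 2-coloring and $\fiberGraph{P}{\cM}$ is bipartite. The only non-routine ingredient is the linear independence of the $m_i$, which is precisely what the numerical hypothesis $|\cM|=2\dim(P)$ is designed to enforce.
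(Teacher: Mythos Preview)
Your proof is correct and follows essentially the same route as the paper's: establish that the $k$ antipodal pairs in $\cM$ are linearly independent because a Markov basis must span the affine hull of $P$, then use this independence to show every closed walk has even length. The paper skips the reduction via Lemma~\ref{l:FullDim} (working directly with $k=\dim(P)\le d$ and the observation $\dim(\QQ\cdot\cM)=\dim(P)$) and phrases the conclusion as ``every cycle is even'' rather than building an explicit $2$-coloring, but these are cosmetic differences.
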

\begin{proof}
Let $k:=\dim(P)\le d$. Since $\cM$ is a Markov basis of $P$,
$\dim(P)=\dim(\QQ\cdot\cM)$ and thus we can write
$\cM=\{m_1,-m_1,\dots,m_k,-m_k\}$. The assumption on the
dimension says that $\{m_1,\dots,m_k\}$ is linear independent.
Let $v\in P\cap\ZZ^d$ and let
$v+\sum_{i=1}^k\lambda_im_i+\sum_{i=1}^k-\mu_i m_i=v$ be a cycle in
$\fiberGraph{P}{\cM}$ of length $r=\sum_{i=1}^k(\lambda_i+\mu_i)$. The
linear independence gives that $\lambda_i=\mu_i$ for all $i\in[k]$ and
thus $r$ is even.
\end{proof}

\begin{remark}\label{r:ConverseIndepMoves}
The converse of Proposition~\ref{p:LinearIndependentMoves} is false in
general since the $8$-cycle can be minimally embedded in $\QQ^1$
(Proposition~\ref{p:Cycles}) with a Markov
basis consisting of $4$ moves.
\end{remark}

\begin{remark}\label{r:LinearIndependence}
Any Markov basis $\cM\subset\ZZ^d$ of a polytope $P\subset\QQ^d$
fulfills $|\cM|\ge\dim(\QQ\cdot\cM)=\dim(P)$. Thus,
Proposition~\ref{p:LinearIndependentMoves} yields a
lower bound on the number of moves in an embedding of non-bipartite
graphs: If $G$ is a graph with $\chi(G)>2$, then any embedding as a
fiber graph needs strictly more than $2d$ moves.
\end{remark}

\begin{remark}
Every $1$-dimensional lattice polytope in $P\subset\QQ^1$ has a Markov basis of size
$|\cM|=2\dim(P)$, namely $\cM=\{-1,1\}$. This equation fails to be true
already in $\QQ^2$: All Markov bases $\cM\subset\ZZ^2$
of the polytope $P\subset\QQ^2$ shown in
Figure~\ref{f:LinearIndependence} have more than $6$ elements, i.e.
$2\cdot\dim(\QQ\cdot\cM)=4<|\cM|$.
\end{remark}

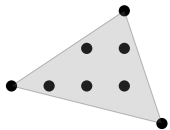
\begin{figure}[htbp]
	\begin{tikzpicture}[xscale=0.5,yscale=0.5]
		\node [fill, circle, inner sep=1.5pt](b2) at (-1,0) {};
		\node [fill, circle, inner sep=1.5pt](b2) at (0,0) {};
		\node [fill, circle, inner sep=1.5pt](b2) at (1,0) {};
		\node [fill, circle, inner sep=1.5pt](b2) at (2,0) {};
		\node [fill, circle, inner sep=1.5pt](b2) at (2,1) {};
		\node [fill, circle, inner sep=1.5pt](b2) at (2,2) {};
		\node [fill, circle, inner sep=1.5pt](b2) at (1,1) {};
		\node [fill, circle, inner sep=1.5pt](b2) at (3,-1) {};
         
\draw [fill=gray, opacity=0.25] (-1,0) --(3,-1) -- (2,2) --cycle; 
	\end{tikzpicture}

\caption{A polytope without a Markov basis with fewer than $6$
moves.}\label{f:LinearIndependence}\end{figure}

\section{Bounds on the fiber dimension}

In this section, we explore upper bounds on the fiber dimension. Our
first observation is that the cartesian products of graphs
behaves nicely with cartesian products of polytopes.

\begin{prop}\label{p:CartesianProduct}
Let $G_1,\ldots,G_n$ be graphs, then $\fdim{\times_{i=1}^n
G_i}\le\sum_{i=1}^n\fdim{G_i}$.
\end{prop}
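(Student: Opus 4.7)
The plan is to realize the Cartesian product as a fiber graph inside the Cartesian product of polytopes, using a disjoint union of coordinate-lifted move sets. For each $i\in[n]$, by definition of fiber dimension, fix a full dimensional lattice polytope $P_i\subset\QQ^{d_i}_{\ge 0}$ with $d_i=\fdim{G_i}$ and a set of moves $\cM_i\subset\ZZ^{d_i}$ such that $G_i\cong\fiberGraph{P_i}{\cM_i}$. Set $d=\sum_{i=1}^n d_i$ and let
\begin{equation*}
P\defas P_1\times\cdots\times P_n\subset\QQ^d_{\ge 0},
\end{equation*}
which is a full dimensional lattice polytope whose integer points factor as $P\cap\ZZ^d=\prod_{i=1}^n(P_i\cap\ZZ^{d_i})$.

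For the moves, let $\iota_i\colon\ZZ^{d_i}\to\ZZ^d$ denote the coordinate embedding that places $\ZZ^{d_i}$ into the $i$-th block and puts zeros elsewhere, and define
\begin{equation*}
\cM\defas\bigcup_{i=1}^n\iota_i(\cM_i).
\end{equation*}
First I would check that $\cM$ is a set of moves: symmetry $\cM=-\cM$ is inherited from the $\cM_i$, and for the no-multiples condition one notes that any vector of $\iota_i(\cM_i)$ has support contained in the $i$-th block, so a nonzero integer multiple of it lies in $\iota_i(\ZZ^{d_i})$; by the corresponding property of $\cM_i$ it is not in $\iota_i(\cM_i)$, and it cannot lie in $\iota_j(\cM_j)$ for $j\neq i$ either because the supports are disjoint.

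Next I would verify that $\fiberGraph{P}{\cM}$ is isomorphic to $\times_{i=1}^n G_i$ under the natural identification of vertex sets. Two integer points $(v_1,\ldots,v_n)$ and $(u_1,\ldots,u_n)$ of $P$ are adjacent in $\fiberGraph{P}{\cM}$ precisely when their difference lies in some $\iota_i(\cM_i)$, which happens if and only if $v_j=u_j$ for all $j\neq i$ and $v_i-u_i\in\cM_i$, i.e.\ $v_i$ and $u_i$ are adjacent in $G_i$ while all other coordinates agree. This is exactly the adjacency relation of the Cartesian product, so the claimed isomorphism holds and the bound $\fdim{\times_{i=1}^n G_i}\le d=\sum_{i=1}^n\fdim{G_i}$ follows.

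No real obstacle arises; the only subtle point is checking the no-scalar-multiples axiom for $\cM$, which is immediate from the disjointness of the supports of the blocks $\iota_i(\ZZ^{d_i})$.
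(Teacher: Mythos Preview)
Your proof is correct and follows essentially the same approach as the paper: form the product polytope $P_1\times\cdots\times P_n$ and take as moves the coordinate-lifted union $\bigcup_i\iota_i(\cM_i)$. The paper reduces to the case $n=2$ and omits the verification that $\cM$ is a valid set of moves and that the adjacency relations match, whereas you spell out both points; otherwise the arguments coincide.
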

\begin{proof}
It suffices to prove the inequality for $n=2$. Let $P_1,P_2,\cM_1,\cM_2$
such that $G_i\cong\fiberGraph{P_i}{\cM_i}$. The cartesian product
$P:=P_1\times P_2$ is a polytope of dimension
$\dim(P_1)+\dim(P_2)$.  Additionally, let $\cM:=\{(m,0)^T:
m\in\cM_1\}\cup\{(0,m)^T: m\in\cM_2\}$. It is straight-forward to
check that
$\fiberGraph{P}{\cM}=\fiberGraph{P_1}{\cM_1}\times\fiberGraph{P_2}{\cM_2}$.
Hence, $\fdim{G_1\times G_2}\le \dim(P)$.
\end{proof}

\begin{remark}\label{r:CartesianBoundNotSharp}
The inequality given in Proposition~\ref{p:CartesianProduct} is sharp
for $K_2\times K_2=C_4$ (see Theorem~\ref{t:CompleteGraphs} and
Proposition~\ref{p:Cycles}).
\end{remark}

\begin{prop}\label{p:Apex}
Let $G$ be a graph and $v\in\vertices{G}$, then
$\fdim{G}\le\fdim{G-v}+1$.
\end{prop}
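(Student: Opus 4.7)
The plan is to take an optimal embedding of $G-v$ in $\QQ^d$ (where $d=\fdim{G-v}$) and build an embedding of $G$ in $\QQ^{d+1}$ by placing the image of $v$ as the apex of a pyramid over the embedding of $G-v$. Fix a full-dimensional lattice polytope $P\subset\QQ^d_{\ge 0}$ and a set of moves $\cM\subset\ZZ^d$ with $G-v\cong\fiberGraph{P}{\cM}$, and let $\phi:\vertices{G-v}\to P\cap\ZZ^d$ be the induced bijection.

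Pick any lattice point $w_0\in\NN^d$ and form the pyramid
\[
P':=\conv\bigl((P\times\{0\})\cup\{(w_0,1)\}\bigr)\subset\QQ^{d+1}_{\ge 0}.
\]
Then $P'$ is a full-dimensional lattice polytope in $\QQ^{d+1}$. The decisive geometric observation is that the integer points of $P'$ are exactly $(P\cap\ZZ^d)\times\{0\}$ together with the single apex $(w_0,1)$: every integer point of $P'$ has integer last coordinate in $[0,1]$, the height-$0$ slice is $P\times\{0\}$, and the height-$1$ slice degenerates to the single point $\{(w_0,1)\}$.

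Next I would define
\[
\cM':=(\cM\times\{0\})\cup\{\pm(w_0-\phi(u),1):u\in N(v)\}\subset\ZZ^{d+1}.
\]
This is symmetric and contains no proper multiples: the last coordinate is $0$ on old moves and $\pm 1$ on new moves, so no old and new move are related by a scalar, and the last coordinate $\pm 1$ forces any scalar among new moves to be $\pm 1$ as well. Extending $\phi$ by $v\mapsto(w_0,1)$, I would check that an edge between two height-$0$ vertices occurs iff their difference lies in $\cM\times\{0\}$, recovering $G-v$, while an edge between $(w_0,1)$ and $(p,0)$ occurs iff $w_0-p=w_0-\phi(u)$ for some $u\in N(v)$, i.e.\ iff $p$ is the image of a neighbor of $v$. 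Therefore $\fiberGraph{P'}{\cM'}\cong G$, which yields $\fdim{G}\le d+1=\fdim{G-v}+1$.

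The only real obstacle is ensuring that no extraneous integer points appear in the new polytope, since any such point could create spurious vertices or be connected by one of the new moves in an unintended way. The pyramid construction handles this cleanly because its height-$1$ slice has the single integer point $(w_0,1)$ by construction, and all intermediate heights admit none at all.
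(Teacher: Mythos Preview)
Your proposal is correct and follows essentially the same pyramid construction as the paper: the paper places the apex at the origin and lifts $P$ to height~$1$ in the new coordinate, whereas you keep $P$ at height~$0$ and place the apex at $(w_0,1)$, but this is only a cosmetic difference. If anything, your write-up is more careful than the paper's in verifying the set-of-moves axioms (symmetry, no multiples) and in ruling out spurious integer points and edges.
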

\begin{proof}
Write $\vertices{G}=\{v_0,\dots,v_n\}$ with $v_0:=v$.
Let $d:=\fdim{G-v}$ and let $\phi: G-v\to\fiberGraph{P}{\cM}$ a graph
isomorphism that embeds $G-v$ in dimension $d$ for a
polytope $P\subset\QQ^d$ and a set of moves $\cM\subset\ZZ^d$.
Let $P':=\conv{0,(1,\phi(v_1)),\dots,(1,\phi(v_n))}\subset\QQ^{1+d}$, then
$\dim(P')=d+1$ and $P'\cap\ZZ^{d+1}=\{0,\phi(v_1),\dots,\phi(v_n\}$.
Let $N\subseteq\{v_1,\dots,v_n\}$ be the neighborhood of $v$ in $G$ and
consider $\cM'=\{(0,m): m\in\cM\}\cup\{\phi(v_i): v_i\in N\}$.
Then $G\cong\fiberGraph{P'}{\cM'}$.
\end{proof}

As in~\cite{Erdos1965}, we obtain an upper bound on the dimension in
terms of the chromatic number of the graph.  Our strategy the
following: First, we construct sets of integer points which represent
the color-classes of the graph in such a way that we can freely assign
moves within them. In a second step, we map the vertices of the graph
on these sets and construct the set of moves accordingly. For this
method to work, the constructed integer points must be the integer
points of a polytope. 

\begin{defn}
A finite set $F\subset\ZZ^d$ is \emph{normal} if
$\conv{F}\cap\ZZ^d=F$.
\end{defn}

\begin{thm}\label{t:ChromaticBound}
Let $G$ be a graph with a $k$-coloring in which $r$ color-classes
have cardinality~$1$, then $\fdim{G}\le2\cdot k-r-1$.
\end{thm}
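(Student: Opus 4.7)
The plan is to place each color class along its own lattice direction, use $k$ further coordinates to mark class membership, and save one dimension by confining the whole configuration to a simplex-like hyperplane. Label the color classes $V_1,\dots,V_k$ so that $|V_i|=1$ for $i\le r$ and $|V_i|=n_i\ge 2$ for $i>r$. I would embed $G$ into $\QQ^{2k-r}$ with coordinates $(x_1,\dots,x_k,y_1,\dots,y_{k-r})$, sending the unique vertex of $V_i$ ($i\le r$) to $e_i$ and the $j$-th vertex of $V_i$ ($i>r$, $j=0,\dots,n_i-1$) to $e_i+j\cdot e_{k+(i-r)}$. All image points satisfy $\sum_{s=1}^k x_s=1$, so their convex hull $P$ has dimension at most $2k-r-1$, and equality holds since $e_1,\dots,e_k$ span a $(k-1)$-simplex and each non-singleton class contributes an additional lattice direction $e_{k+(i-r)}$.

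For the normality step I would show that $P$ sits inside the region cut out by $\sum_{s=1}^k x_s=1$, $x_s\ge 0$, $y_j\ge 0$, and $y_{i-r}\le(n_i-1)x_i$ for $i>r$. Any lattice point in this region must have exactly one $x_{i_0}=1$ and the rest zero; the inequalities then pin down $y_j=0$ for $j\ne i_0-r$ and $y_{i_0-r}\in\{0,\dots,n_{i_0}-1\}$ when $i_0>r$, so the lattice points of $P$ are precisely the chosen image points.

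As the set of moves I would take $\cM:=\{\pm(v_{i,j}-v_{i',j'}):\{v_{i,j},v_{i',j'}\}\in\edges{G}\}$; properness of the coloring forces $i\ne i'$ in every such difference, so each move carries one $+1$ and one $-1$ among its first $k$ coordinates. This single feature settles the remaining checks at once: no nonzero integer multiple of a move can again lie in $\cM$, and from any move one reads off $i,i'$ from the first $k$ coordinates and $j,j'$ from the $y$-coordinates, so each move corresponds to exactly one edge of $G$ and $\fiberGraph{P}{\cM}\cong G$. Applying Lemma~\ref{l:FullDim} to $P\subset\QQ^{2k-r}$ then yields the bound $\fdim{G}\le 2k-r-1$. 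The main obstacle I expect is the normality verification: without it $P$ could acquire spurious lattice points that would join the fiber graph and distort its combinatorics, so the axis-aligned placement together with the simplex constraint is what makes the description of $P\cap\ZZ^{2k-r}$ tight enough to work.
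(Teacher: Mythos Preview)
Your proof is correct and follows essentially the same strategy as the paper: encode the color classes by the vertices of a $(k-1)$-simplex in the first $k$ coordinates and stretch each class along its own lattice direction, then take the moves to be the differences of adjacent image points. The only cosmetic differences are that the paper works in $\QQ^{2k}$ (assigning a $y$-coordinate to every class and observing afterwards that singleton classes contribute degenerate segments, so $\dim(P)\le 2k-r-1$), whereas you drop the redundant $y$-coordinates for singletons from the outset and work in $\QQ^{2k-r}$; your normality argument via the explicit inequalities $y_{i-r}\le (n_i-1)x_i$ is also a bit more direct than the paper's projection-to-the-simplex argument.
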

\begin{proof}
Write $\vertices{G}=V_1\cup\cdots\cup V_k$ and set $n_i:=|V_i|$.
Define for $i\in[k]$
\begin{equation*}
W_i:=\{(e_i,j\cdot e_i)^T\in\NN^{2k}: j\in[n_i]\}\subset\NN^{2k}
\end{equation*}
and let $W:=\cup_{i=1}^k W_i$ and $P:=\conv{W}$.
To show that $P\cap\ZZ^{2k}=W$, let $u\in P\cap\ZZ^{2k}$. Since
every $W_i$ is normal, there exists $w_i\in W_i$ and
$\lambda_1,\dots,\lambda_k\in\QQ$ with $0\le\lambda_i\le 1$ for all
$i\in[k]$ and $\sum_{i=1}^k\lambda_i=1$
such that $u=\sum_{i=1}^k\lambda_i w_i$. The projection of $W$
onto the first $k$ coordinates is the set of integer points of the
standard simplex and thus normal. The projection of $u$ onto its first
$k$ coordinates is $e_i$ for some $i\in[k]$. In particular, the only
choice to build an integer vector is thus
$\lambda_j=0$ for $j\neq i$ and $\lambda_i=1$, i.e., $u=w_i\in W_i$.

Let us now construct a graph on $\cP\cap\ZZ^{2k}$ which is isomorphic
to $G$. For that, let $\phi:\cup_{i=1}^kV_i\to\cup_{i=1}^kW_i$
be any bijection which maps elements from $V_i$ to $W_i$ and consider
the set of moves $\cM=\{\phi(v)-\phi(w): \{v,w\}\in\edges{G}\}$. By
construction of $\cM$, $\phi$ is a graph homomorphism from $G$ to
$\fiberGraph{P}{\cM}$. Since edges in $G$ do only connect nodes from
different color classes, the first $k$ coordinates of any element in
$\cM$ do only contain elements from $\{-1,0,1\}$ and thus $\cM$ cannot
contain multiples. Next, let $s\in[n_i]$ and $t\in[n_j]$ such that
$(e_i,se_i)^T-(e_j,te_j)^T=\phi(u)-\phi(v)\in\cM$ with
$v,w\in\vertices{G}$. It follows immediately that
$\phi(v)=(e_j,te_j)^T$ and hence $\phi(u)=(e_i,se_i)^T$. Thus, $\phi$
maps edges from $G$ to $\fiberGraph{P}{\cM}$ bijectively and hence
$\fdim{G}\le\dim(P)$. The vertices of the polytope $P$ are
$\{(e_1,e_1)^T,(e_1,n_1e_1)^T,\dots,(e_k,e_k)^T,(e_k,n_ke_k)^T\}$ and
since $n_i=1$ for $r$ indices $i\in[k]$, $\dim(P)\le 2k-r-1$.  
\end{proof}

\begin{cor}\label{c:ChromaticBound}
For any graph $G$, $\fdim{G}\le2\cdot\chi(G)-1$.
\end{cor}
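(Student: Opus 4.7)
The plan is to apply Theorem~\ref{t:ChromaticBound} directly with the optimal coloring. First, fix any proper coloring of $G$ that uses exactly $\chi(G)$ colors, which exists by the definition of the chromatic number. Let $r\ge 0$ denote the number of color classes in this coloring that consist of a single vertex. Taking $k=\chi(G)$ in Theorem~\ref{t:ChromaticBound} then yields $\fdim{G}\le 2\chi(G)-r-1$, and since $r\ge 0$ the desired conclusion $\fdim{G}\le 2\chi(G)-1$ follows immediately by dropping the non-positive term $-r$.

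There is essentially no obstacle here: the corollary is just the weakened form of Theorem~\ref{t:ChromaticBound} obtained by discarding the information about singleton color classes. The refined bound $2\chi(G)-r-1$ is the stronger statement actually provided by the polytope construction in the proof of the theorem, while $2\chi(G)-1$ is the cleaner, purely chromatic-number form that is most convenient for stating an analogue of the classical dimension bound from \cite{Erdos1965}.
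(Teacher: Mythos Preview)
Your argument is correct and matches the paper's approach: the corollary is stated there without proof, as an immediate consequence of Theorem~\ref{t:ChromaticBound} obtained exactly as you describe, by taking $k=\chi(G)$ and using $r\ge 0$.
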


\begin{remark}\label{r:FourColor}
By the Four Color Theorem~\cite{Appel1977}, the fiber dimension of
every planar graph is at most~$7$.
\end{remark}

\section{Fiber dimension one}

The class of connected graphs of fiber dimension $1$ consists of far
more than path graphs. To see it, let us first specialize
Definition~\ref{d:FiberGraph} to the $1$-dimensional case:

\begin{defn}\label{d:DifferenceGraphs}
Let $n\in\NN_{\ge 1}$ and let $\cD\subseteq[n-1]$ be a finite set such
that for all distinct $d,d'\in\cD$ neither $d|d'$ nor $d'|d$. The graph
$\diffGraph{n}{\cD}$ has nodes $[n]$ where $i$ and $j$ are adjacent if
$|i-j|\in\cD$. A graph $G$ which is isomorphic to $\diffGraph{n}{\cD}$
is a \emph{difference graph}.
\end{defn}

\begin{prop}\label{p:FiberDim1}
A graph has fiber dimension $1$ if and only if it is a
difference graph.
\end{prop}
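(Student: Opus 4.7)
The plan is to unravel the two definitions in dimension one and observe that they match verbatim. A full-dimensional lattice polytope $P\subset\QQ^1_{\ge 0}$ is just an interval $[a,b]$ with $a,b\in\NN$ and $a<b$; its integer points form a run of $n:=b-a+1$ consecutive integers which, after translation (which does not affect the graph structure), may be identified with $[n]$. A set of moves $\cM\subset\ZZ$ is symmetric and free of proper multiples, hence completely determined by its positive part $\cD:=\cM\cap\NN_{>0}$ via $\cM=\cD\cup(-\cD)$. Two integer points $i,j\in[n]$ are adjacent in $\fiberGraph{P}{\cM}$ iff $i-j\in\cM$ iff $|i-j|\in\cD$, which is exactly the adjacency rule of $\diffGraph{n}{\cD}$.

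Next I would verify that the axioms on $\cM$ and on $\cD$ match under this correspondence. The ``no proper multiples'' condition on $\cM$ says that for $m\in\cM$ and $\lambda\ge 2$, $\lambda m\not\in\cM$. Because $\cM=-\cM$, restricting this to positive elements gives: for $d\in\cD$ and $\lambda\ge 2$, $\lambda d\not\in\cD$; that is, no element of $\cD$ is a nontrivial integer multiple of another. Since $\cD\subset\NN_{>0}$, this is in turn equivalent to the statement that for distinct $d,d'\in\cD$, neither $d\mid d'$ nor $d'\mid d$, which is exactly the hypothesis in Definition~\ref{d:DifferenceGraphs}.

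With this dictionary in hand, both implications are immediate. If $\fdim{G}=1$, I pick any embedding $G\cong\fiberGraph{P}{\cM}$ witnessing the definition, read off $n$ and $\cD$ as above, and conclude $G\cong\diffGraph{n}{\cD}$. Conversely, given $G\cong\diffGraph{n}{\cD}$, I set $P:=\conv{\{1,n\}}\subset\QQ_{\ge 0}$ and $\cM:=\cD\cup(-\cD)$; the divisibility-free hypothesis on $\cD$ makes $\cM$ a legitimate set of moves, $P$ is a full-dimensional lattice polytope in $\QQ^1_{\ge 0}$, and $\fiberGraph{P}{\cM}\cong G$ by the matching of adjacencies above, so $\fdim{G}\le 1$. (The only thing that needs $\fdim{G}=1$ rather than $0$ is $n\ge 2$, i.e.\ $G$ has at least two vertices, which is forced by full-dimensionality of $P$; the trivial single-vertex case is excluded from difference graphs for the same reason.)

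There is no real obstacle here: the proposition is essentially a dictionary lookup between the one-dimensional instance of Definition~\ref{d:FiberGraph} and the combinatorial Definition~\ref{d:DifferenceGraphs}. The only point demanding care is the translation of the ``no multiples'' axiom on $\cM$ into the divisibility-free axiom on $\cD$, but this is a one-line observation once one passes from $\cM$ to its positive part.
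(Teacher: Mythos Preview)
The paper states this proposition without proof, evidently regarding it as an immediate unravelling of Definitions~\ref{d:FiberGraph} and~\ref{d:DifferenceGraphs} in dimension one; your argument makes that dictionary explicit and is correct. The only detail you leave implicit is that $\cD:=\cM\cap\NN_{>0}$ might a priori contain elements $\ge n$, but such moves contribute no edges and can simply be discarded to place $\cD$ inside $[n-1]$ as Definition~\ref{d:DifferenceGraphs} requires.
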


\begin{lemma}\label{l:Coprime}
Let $n\in\NN$ with $n\ge 2$ and $\cD\subseteq[n]$. If $\diffGraph{n}{\cD}$ is
connected, then $\gcd(\cD)=1$.
\end{lemma}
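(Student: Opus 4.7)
The plan is a short contrapositive/contradiction argument exploiting that $\cD$-edges preserve residues modulo $\gcd(\cD)$. Suppose for contradiction that $g := \gcd(\cD) \geq 2$. For any edge $\{i,j\}$ of $\diffGraph{n}{\cD}$, Definition~\ref{d:DifferenceGraphs} gives $|i-j| \in \cD$, and since every element of $\cD$ is divisible by $g$, we obtain $i \equiv j \pmod g$.

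Iterating along any path shows that vertices in the same connected component of $\diffGraph{n}{\cD}$ all share a common residue modulo $g$. Now invoke the hypothesis $n \geq 2$: both $1$ and $2$ are nodes of $\diffGraph{n}{\cD}$, and $2 - 1 = 1$ is not divisible by $g \geq 2$, so $1 \not\equiv 2 \pmod g$. Hence $1$ and $2$ lie in distinct components, contradicting connectedness.

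There is no real obstacle here — the lemma is essentially the observation that the moves $\{\pm d : d \in \cD\}$ generate the subgroup $g\ZZ$ of $\ZZ$, so orbits on $[n]$ are exactly residue classes modulo $g$. The only thing to make sure of is that the hypothesis $n \geq 2$ is used to guarantee the existence of two vertices in different residue classes mod~$g$; any $g \geq 2$ then separates $1$ from $2$.
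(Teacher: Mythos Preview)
Your proof is correct and takes essentially the same approach as the paper: both arguments use that a path from $1$ to $2$ forces $\gcd(\cD)\mid 1$, the paper phrasing this directly via the linear combination $1+\sum_i\lambda_i d_i=2$, while you phrase the contrapositive via preservation of residues modulo~$g$.
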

\begin{proof}
Since $n\ge 2$, there exists a path between $1$ and $2$ in
$\diffGraph{n}{\cD}$. Let $d_1,\dots,d_k\in\cD$ be the distinct
integers that appear in that path and write
$1+\sum_{i=1}^k\lambda_id_i=2$ for
$\lambda_1,\dots,\lambda_k\in\ZZ\setminus\{0\}$. Then
$\gcd(d_1,\dots,d_k)$ divides $1$.
\end{proof}

\begin{prop}\label{p:Cycles}
For any $n\in\NN$ with $n\ge 3$,
\begin{equation*}
\fdim{C_n}=\begin{cases}
1\text{, if }n\not\in\{3,4,6\}\\
2\text{, if }n\in\{3,4,6\}
\end{cases}.
\end{equation*}
\end{prop}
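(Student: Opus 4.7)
The plan is to invoke Proposition~\ref{p:FiberDim1}: a graph has fiber dimension~$1$ iff it is a difference graph. Since $\diffGraph{N}{\cD}$ has vertex set $[N]$, any isomorphism $C_n \cong \diffGraph{N}{\cD}$ forces $N = n$, so the question reduces to: for which $n$ does an admissible $\cD$ exist?

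For the uniform upper bound $\fdim{C_n} \le 2$, I would exhibit the two-dimensional realization
$P := \conv{(1,0), (2,0), \ldots, (n-1,0), (0,1)}$
together with the moves $\cM := \{\pm(1,0),\, \pm(1,-1),\, \pm(n-1,-1)\}$. A direct inspection of the defining inequalities of the triangle $P$ shows $P \cap \ZZ^2$ consists of exactly the $n$ listed points; the moves $\pm(1,0)$ produce the path on the collinear points, while $\pm(1,-1)$ and $\pm(n-1,-1)$ each contribute a single edge out of $(0,1)$ (to $(1,0)$ and to $(n-1,0)$ respectively), closing a single $n$-cycle.

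For $n \notin \{3,4,6\}$, the plan is to give a one-dimensional realization with a two-element move set. Choose $a \in \{2, \ldots, n-2\}$ coprime to $n$; such $a$ exists precisely because $\phi(n) - 2 \ge 1$. Set $b := n - a$ and $\cD := \{a, b\}$. Coprimality of $a$ with $n$ together with $a, b \ge 2$ makes $\cD$ a divisibility antichain. A case analysis on which of the intervals $[1,a]$, $(a,b]$, $(b,n]$ contains $i$ (assuming $a < b$) shows that $\diffGraph{n}{\cD}$ is $2$-regular; the walk starting at $1$ that iterates $+a$ when it stays in $[n]$ and $-b$ otherwise visits every element of $[n]$ exactly once since each step is $\equiv +a \pmod n$ and $\gcd(a,n)=1$, so the graph is a single $n$-cycle.

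For the lower bound when $n \in \{3,4,6\}$, suppose for contradiction $C_n \cong \diffGraph{n}{\cD}$. The key observation is that vertex~$1$ has neighbors exactly $\{1 + d : d \in \cD\}$, all lying in $[n]$ because $\cD \subseteq [n-1]$; thus $\deg(1) = |\cD|$, and $2$-regularity of $C_n$ forces $|\cD| = 2$, say $\cD = \{a, b\}$. Counting edges gives $n = (n-a)+(n-b)$, hence $a+b = n$; Lemma~\ref{l:Coprime} gives $\gcd(a, n) = \gcd(a,b) = 1$; and the antichain condition excludes $a \in \{1, n-1\}$. So $a$ must be one of the $\phi(n)-2$ integers in $\{2, \ldots, n-2\}$ coprime to $n$, but $\phi(n)=2$ for $n \in \{3,4,6\}$ leaves none, a contradiction. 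The main obstacle is the $2$-regularity and connectedness verification in the fdim-one construction; the heart of the whole argument is the degree-of-vertex-$1$ observation, which turns the problem into a bookkeeping exercise on Euler's totient.
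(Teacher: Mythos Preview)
Your proof is correct and shares the core construction with the paper: realizing $C_n$ as $\diffGraph{n}{\{k,n-k\}}$ for some $k$ coprime to $n$, with the existence of such $k\in\{2,\ldots,n-2\}$ controlled by Euler's totient. The surrounding pieces differ, though. For the bound $\fdim{C_n}\le 2$ the paper simply invokes Proposition~\ref{p:Apex} (deleting a vertex from $C_n$ leaves a path, which has fiber dimension~$1$), whereas you build an explicit triangle in $\QQ^2$; the paper's route is shorter, yours is self-contained. For the lower bound when $n\in\{3,4,6\}$, the paper treats $n=3$ separately via $C_3\cong K_3$ and Theorem~\ref{t:CompleteGraphs}, and for $n\in\{4,6\}$ it argues $|\cD|=2$, applies Lemma~\ref{l:Coprime}, and then enumerates the handful of coprime pairs in $[n-1]$ and checks each fails to give a cycle. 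Your argument is tidier here: the edge-count identity $(n-a)+(n-b)=n$ forces $a+b=n$, and together with the antichain condition this pins $a$ to $\{2,\ldots,n-2\}$ coprime to $n$, which is empty exactly when $\phi(n)=2$. So all three exceptional cases drop out uniformly, with no case check and no appeal to the complete-graph result.
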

\begin{proof}
Let $n\ge 3$ with $n\in\NN\setminus\{3,4,6\}$. We first show that
there exists an integer $k\in\NN$ with $2\le k<\frac{n}{2}$ such that
$\gcd(k,n)=1$. Let $\phi:\NN\to\NN$ be Euler's totient function. Since
$n\in\NN\setminus\{3,4,6\}$ and $n\ge 3$, $\phi(n)\ge 4$ and we have
for all $k\in[n]$, $\gcd(k,n)=1$ if and only if $\gcd(n-k,k)=1$. In
particular, coprime elements of $n$ come in pairs $(k,n-k)$ with
$k<n-k$. Thus, since $\phi(n)\ge 4$, there must exists $k\in[n]$
with $1<k<\frac{n}{2}$ such that $\gcd(k,n)=1$.  We now show that
$\diffGraph{n}{\{k,n-k\}}$ is a cycle of length $n$.  Clearly,
$n-k$ is not a multiple of $k$ since this would imply that $n$ is
a multiple of $k$ as well which in turn would contradict
$\gcd(n,k)=1$ since $k>1$.  Any node in $\diffGraph{n}{\{k,n-k\}}$
has degree $2$ and hence it suffices to prove that this graph is
connected. Since $k$ and $n$ are coprime, $\langle
k+n\ZZ\rangle=\ZZ_n$. Now, take distinct $i,j\in[n]$, then there
exists $s\in\NN$ such that $j+n\ZZ=i+sk+n\ZZ$ in $\ZZ_n$.  For any
$r\in[s]$, let $i_r\in[n]$ such that $i_r+n\ZZ=i+rk+n\ZZ$. Either
$i_r+k$ or $i_r-(n-k)$ are in $[n]$ and since their congruence
classes in $\ZZ_{n}$ coincide, $i_{r-1}$ and $i_{r}$ are adjacent
in $\diffGraph{n}{\{k,n-k\}}$.  Since $i_k=j$, $i$ and $j$ are
connected. It follows that $C_n=\diffGraph{n}{\{k,n-k\}}$.

Conversely, let $n\in\{3,4,6\}$. Clearly, $\fdim{C_n}\le 2$ due to
Proposition~\ref{p:Apex} since a path has fiber dimension $1$. Hence,
it suffices to show that $\fdim{C_n}>1$ for $n\in\{3,4,6\}$. If $n=3$,
then $C\cong K_3$ and the claim follows from
Theorem~\ref{t:CompleteGraphs}. If $n\in\{4,6\}$, assume that there
exists $\cD\subseteq[n-1]$ such that
$C_n\cong\diffGraph{n}{\cD}$ is a difference graph. It is easy to see
that $|\cD|=2$ since if $|\cD|\ge 3$ or $|\cD|=1$, the node $1$ has
either degree greater than $3$ or is a leaf respectively. Thus, we can
write $\cD=\{d_1,d_2\}$.  Since $\diffGraph{n}{\cD}$ is connected,
$\gcd(d_1,d_2)=1$ by Lemma~\ref{l:Coprime}. Hence, the only possible
choices for $\{d_1,d_2\}$ are $\{2,3\}$ if $n=4$ and
$\{\{2,3\},\{2,5\},\{3,4\},\{3,5\},\{4,5\}\}$ if $n=6$. However, in
all these cases, $\diffGraph{n}{\{d_1,d_2\}}$ is not a cycle.
\end{proof}

\begin{figure}[h!]
	\begin{tikzpicture}[xscale=0.25,yscale=0.5]
		\node [fill, circle, inner sep=1.5pt](b1) at (0,0) {};
		\node [fill, circle, inner sep=1.5pt](b2) at (2,0) {};
		\node [fill, circle, inner sep=1.5pt](b3) at (4,0) {};
		\node [fill, circle, inner sep=1.5pt](b4) at (6,0) {};
		\node [fill, circle, inner sep=1.5pt](b5) at (8,0) {};
      
      \draw[thick,bend angle=25, bend left](b1) to (b3);
      \draw[thick,bend angle=25, bend left](b3) to (b5);
      \draw[thick,bend angle=25, bend left](b2) to (b4);
      \draw[thick,bend angle=25, bend right](b1) to (b4);
      \draw[thick,bend angle=25, bend right](b2) to (b5);
	\end{tikzpicture}
   \hspace{2cm}
	\begin{tikzpicture}[xscale=0.25,yscale=0.5]
		\node [fill, circle, inner sep=1.5pt](b1) at (0,0) {};
		\node [fill, circle, inner sep=1.5pt](b2) at (2,0) {};
		\node [fill, circle, inner sep=1.5pt](b3) at (4,0) {};
		\node [fill, circle, inner sep=1.5pt](b4) at (6,0) {};
		\node [fill, circle, inner sep=1.5pt](b5) at (8,0) {};
		\node [fill, circle, inner sep=1.5pt](b6) at (10,0) {};
		\node [fill, circle, inner sep=1.5pt](b7) at (12,0) {};
		\node [fill, circle, inner sep=1.5pt](b8) at (14,0) {};
		\node [fill, circle, inner sep=1.5pt](b9) at (16,0) {};
		\node [fill, circle, inner sep=1.5pt](b10) at (18,0) {};
      
      \draw[thick,bend angle=25, bend left](b1) to (b4);
      \draw[thick,bend angle=25, bend left](b4) to (b7);
      \draw[thick,bend angle=25, bend left](b7) to (b10);

      \draw[thick,bend angle=25, bend left](b2) to (b5);
      \draw[thick,bend angle=25, bend left](b5) to (b8);

      \draw[thick,bend angle=25, bend left](b3) to (b6);
      \draw[thick,bend angle=25, bend left](b6) to (b9);

      \draw[thick, bend angle=11,bend right](b1) to (b8);
      \draw[thick, bend angle=11,bend right](b2) to (b9);
      \draw[thick, bend angle=11,bend right](b3) to (b10);

	\end{tikzpicture}\hfill

	\caption{The difference graphs $\diffGraph{5}{\{2,3\}}=C_5$ and
   $\diffGraph{10}{\{3,7\}}=C_{10}$.}

\end{figure}

\section{Complete graphs}

\begin{prop}\label{p:StarGraph}
Let $n\ge 3$, then $\fdim{K_{1,n}}=2$.
\end{prop}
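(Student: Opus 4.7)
The plan is to prove both inequalities $\fdim{K_{1,n}}\le 2$ and $\fdim{K_{1,n}}\ge 2$ separately; the upper bound is immediate from an already-proven general tool, while the lower bound is the substantive part and will require ruling out one-dimensional embeddings by a divisibility argument.

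For the upper bound I would invoke Proposition~\ref{p:Apex} with $v$ equal to the central vertex of $K_{1,n}$. The graph $K_{1,n}-v$ is then the edgeless graph on $n\ge 3$ vertices, which by Remark~\ref{r:UpperBoundOnfdim} has fiber dimension exactly $1$. Proposition~\ref{p:Apex} therefore gives $\fdim{K_{1,n}}\le 1+1=2$.

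For the lower bound, since $K_{1,n}$ has more than one vertex, $\fdim{K_{1,n}}\ge 1$, and it suffices by Proposition~\ref{p:FiberDim1} to show that $K_{1,n}$ is not a difference graph for $n\ge 3$. Suppose for contradiction that $K_{1,n}\cong \diffGraph{n+1}{\cD}$ for some valid $\cD\subseteq[n]$, and let $c\in[n+1]$ be the vertex corresponding to the center. Every vertex of $\diffGraph{n+1}{\cD}$ has degree at most $2|\cD|$, but $c$ has degree $n\ge 3$, so $|\cD|\ge 2$. On the other hand, since $c\in[n+1]$, at least one of $c-1,c+1$ lies in $[n+1]\setminus\{c\}$; that vertex is a leaf of $K_{1,n}$ and is therefore adjacent to $c$, which forces $1\in\cD$. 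Taking any other $d\in\cD$ (which exists because $|\cD|\ge 2$) we get $1\mid d$, contradicting the divisibility condition in Definition~\ref{d:DifferenceGraphs}.

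The main obstacle I anticipate is making sure the argument is air-tight across the boundary cases for $c$, in particular when the center lies at an extreme position $c=1$ or $c=n+1$; but in both of these cases a neighbor at distance $1$ still exists (namely $c+1$ or $c-1$), so the same conclusion $1\in\cD$ holds, and the divisibility obstruction rules out $|\cD|\ge 2$ uniformly.
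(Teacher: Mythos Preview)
Your proof is correct and follows essentially the same line as the paper's. Both use Proposition~\ref{p:Apex} on the central vertex for the upper bound, and for the lower bound both observe that an adjacency at distance~$1$ from the center forces $1\in\cD$, which by the divisibility condition in Definition~\ref{d:DifferenceGraphs} means $\cD=\{1\}$; the paper phrases the contradiction as ``then $\diffGraph{n+1}{\cD}$ is a path,'' while you obtain it by first noting $|\cD|\ge 2$ via a degree count, but these are two ways of packaging the same obstruction.
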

\begin{proof}
Let $v\in\vertices{K_{1,n}}$ be the vertex with maximal degree $n$.
Removing $v$ from $K_{1,n}$ gives a graph on $n\ge 3$ nodes without 
edges, i.e., the fiber dimension of this graph is $1$.
Proposition~\ref{p:Apex} then yields that $\fdim{K_{1,n}}\le 2$.
Conversely, assume that $\fdim{K_{1,n}}=1$ and let $\cD\subset[n]$
such that $K_{1,n}\cong\diffGraph{n+1}{\cD}$ is a difference graph. The
graph isomorphism maps $v$ to some $j\in\{1,\dots,n+1\}$. Since $j$
must be adjacent to all vertices in $\{1,\dots,n+1\}\setminus\{j\}$,
$1\in\cD$. The constraints on $\cD$ imply already that
$\cD=\{1\}$ and thus $\diffGraph{n+1}{\cD}$ is a path.
\end{proof}

\begin{prop}\label{p:CompleteRpartiteGraphs}
For any $n_1,\ldots,n_r\in\NN$, 
$$\fdim{K_{n_1,\ldots,n_r}}\le\lceil\log_2
r\rceil+\lceil\log_2\max\{n_i: i\in[r]\}\rceil.$$
\end{prop}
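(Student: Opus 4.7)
The plan is to realize $K_{n_1,\dots,n_r}$ as a fiber graph inside the unit cube $[0,1]^{a+b}\subset\QQ^{a+b}$, where $a:=\lceil\log_2 r\rceil$ and $b:=\lceil\log_2 N\rceil$ with $N:=\max_i n_i$. Since $r\le 2^a$, I can choose distinct $p_1,\dots,p_r\in\{0,1\}^a$ to encode the color classes, and since $n_i\le N\le 2^b$, for each $i\in[r]$ I can choose distinct $q_{i,1},\dots,q_{i,n_i}\in\{0,1\}^b$ to encode the vertices within class $i$. Set $W:=\{(p_i,q_{i,j}): i\in[r], j\in[n_i]\}\subset\{0,1\}^{a+b}$, $P:=\conv{W}$, and $\cM:=\{(v,w)\in\{-1,0,1\}^{a+b}: v\ne 0\}$. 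Then $\cM=-\cM$, and any $\lambda\ge 2$ multiple of a vector in $\cM$ has some entry of absolute value $\ge 2$, hence leaves $\{-1,0,1\}^{a+b}$; so $\cM$ is a set of moves in the sense of Definition~\ref{d:FiberGraph}.

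The adjacency check is immediate: for $u=(p_i,q_{i,j})$ and $u'=(p_{i'},q_{i',j'})$ in $W$, the difference $u-u'$ lies in $\{-1,0,1\}^{a+b}$, and it lies in $\cM$ precisely when its first $a$ coordinates do not all vanish, i.e.\ precisely when $p_i\ne p_{i'}$, i.e.\ precisely when $i\ne i'$. This is exactly the adjacency relation of $K_{n_1,\dots,n_r}$. It then remains to confirm that the fiber graph $\fiberGraph{P}{\cM}$ has the right vertex set, and to conclude via Lemma~\ref{l:FullDim}.

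The crux is to prove $P\cap\ZZ^{a+b}=W$. Because $W\subset[0,1]^{a+b}$, every integer point of $P$ is a $0/1$-vector. Given $(p,q)\in P\cap\{0,1\}^{a+b}$, write $(p,q)=\sum_{i,j}\lambda_{i,j}(p_i,q_{i,j})$ and set $\mu_i:=\sum_j\lambda_{i,j}$, so that $p=\sum_i\mu_i p_i$. For each coordinate $k$, the fact that $p_k\in\{0,1\}$ and $(p_i)_k\in\{0,1\}$ forces $\mu_i=0$ whenever $(p_i)_k\ne p_k$; since the $p_i$ are distinct, $\mu_{i_0}=1$ for the unique index $i_0$ with $p_{i_0}=p$, and $\mu_i=0$ otherwise. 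Hence $q=\sum_j\lambda_{i_0,j}\,q_{i_0,j}$ is a convex combination of the distinct $0/1$-vectors $q_{i_0,j}$, and the same argument applied in the last $b$ coordinates gives $q=q_{i_0,j_0}$ for some $j_0$. Thus $(p,q)\in W$.

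With $P\cap\ZZ^{a+b}=W$ established, Step~1 and Step~2 show $\fiberGraph{P}{\cM}\cong K_{n_1,\dots,n_r}$. Since $\dim P\le a+b$, Lemma~\ref{l:FullDim} produces a full-dimensional realization in $\QQ^{\dim P}\subseteq\QQ^{a+b}$, giving $\fdim{K_{n_1,\dots,n_r}}\le a+b$. The only real obstacle is the $P\cap\ZZ^{a+b}=W$ step; the rest is a routine verification resting on the observation that $0/1$-vectors are the extreme points of $[0,1]^{a+b}$ and so cannot be written as nontrivial convex combinations of other $0/1$-vectors.
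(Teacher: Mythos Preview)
Your proposal is correct and is essentially the paper's own argument: both embed the vertex set into $\{0,1\}^{a+b}$ with the first block encoding the $r$ color classes and the second block encoding vertices within a class, take the convex hull, and use a move set consisting of $\{-1,0,1\}$-vectors whose first block is nonzero, then invoke Lemma~\ref{l:FullDim}. The only cosmetic differences are that the paper restricts the first block of $\cM$ to actual differences $c_i-c_j$ (your larger $\cM$ introduces no extra edges, so this is immaterial) and that the paper simply asserts ``all subsets of $\{0,1\}^{s+m}$ are normal'' where you spell out the convex-combination argument---which, as you note yourself at the end, is just the fact that $0/1$-vectors are extreme points of the cube.
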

\begin{proof}
First, decompose the vertex set of  $K_{n_1,\ldots,n_r}$ into its
color clases $V_1,\dots,V_r$ such that $|V_i|=n_i$. Let
$s:=\lceil\log_2 r\rceil$ and $m:=\lceil\log_2\max\{n_i:
i\in[r]\}\rceil$. We prove the upper bound by realizing
$K_{n_1,\ldots,n_r}$ as fiber graph. For any $i\in[r]$, we can choose
a set $W_i\subseteq\{0,1\}^m$ of size $n_i$ since $n_i\le 2^m$.
Similarly, choose a set $C:=\{c_1,\ldots,c_r\}\subseteq\{0,1\}^s$ of
size $r$. The set 
$$\cup_{i=1}^r \{c_i\}\times W_i\subseteq\{0,1\}^{s+m}$$
has cardinality $n_1+\cdots+n_r$ and is normal since all subsets of of
$\{0,1\}^{s+m}$ are normal. Let $\cP$ be its convex hull and let
$\phi:\cup_{i=1}^rV_i\to\cP\cap\NN^{s+m}$ be bijective map which maps
nodes from $V_i$ to $W_i$ (here, it doesn't matter which node of color
$i$ gets mapped to which node in $W_i$ since the colour classes are
independent sets). Let us now construct the Markov basis. Let
$\cM:=\{c_i-c_j: i\neq j\}\times\{-1,0,1\}^m$. The first $s$
coordinates of every element in $\cM$ are non-zero and hence there are
no edges within $\{c_i\}\times W_i$ for any $i\in[r]$ in
$\fiberGraph{\cP}{\cM}$.  Since for distinct $i$ and $j$, all elements
in $\{c_i\}\times W_i$ and $\{c_j\}\times W_j$ are adjacent,
$\fiberGraph{\cP}{\cM}$ is isomorphic to $K_{n_1,\ldots,n_r}$ and with
Lemma~\ref{l:FullDim}, the claim follows.
\end{proof}

\begin{lemma}\label{l:DiscreteBlichfeldt}
Let $\cL\subset\ZZ^n$ be a lattice of full rank and let
$P\subset\QQ^d$ be a set such that for any distinct $v,w\in
P\cap\ZZ^d$, $v-w\not\in\cL$.  Then $|P\cap\ZZ^d|\le|\ZZ^d/\cL|$.
\end{lemma}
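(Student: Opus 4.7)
The plan is to proceed by the pigeonhole principle via the quotient map. Let $\pi : \ZZ^d \to \ZZ^d/\cL$ be the canonical projection, and consider its restriction to $P \cap \ZZ^d$. The hypothesis says precisely that for distinct $v, w \in P \cap \ZZ^d$ we have $v - w \notin \cL$, which is equivalent to $\pi(v) \neq \pi(w)$. Hence $\pi|_{P \cap \ZZ^d}$ is injective, and therefore $|P \cap \ZZ^d| \le |\ZZ^d/\cL|$.

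The only small point to justify is that $|\ZZ^d/\cL|$ is actually a meaningful (finite) quantity, which is what makes the bound non-trivial. Because $\cL$ has full rank inside $\ZZ^d$, one can pick a $\ZZ$-basis of $\cL$ and form the matrix $B \in \ZZ^{d \times d}$ whose columns are this basis; $B$ is non-singular since $\cL$ has rank $d$. Then $|\ZZ^d/\cL| = |\det(B)|$, which is finite (and positive). Alternatively, one may simply invoke the standard fact that a full-rank sublattice of $\ZZ^d$ has finite index.

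There is no real obstacle here: the statement is a clean pigeonhole argument once the finiteness of the index is acknowledged. The proof will essentially be two or three sentences, and can be assembled without further machinery.
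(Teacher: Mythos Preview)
Your proof is correct and follows the same idea as the paper: both use the quotient map $\ZZ^d\to\ZZ^d/\cL$ and the pigeonhole principle. Your formulation is actually more direct---the paper considers the set $\{\phi(v_n-v_i):i\in[n-1]\}$ of $n-1$ distinct nonzero classes to get $n-1\le|\ZZ^d/\cL|-1$, whereas you simply observe that $\pi$ is injective on $P\cap\ZZ^d$; these are equivalent, and your version avoids the unnecessary translation by $v_n$.
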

\begin{proof}
Let $P\cap\ZZ^d=\{v_1,\dots,v_n\}$ and consider the linear map
$\phi:\ZZ^d\to\ZZ^d/\cL$, $\phi(v)=v+\cL$. By assumption,
$\phi(v_i-v_j)\neq 0$ in $\ZZ^d/\cL$ for all $i,j\in[n]$ with $i\neq
j$. Assume that there are $i,j\in[n-1]$ with $i\neq j$ such that
$\phi(v_n-v_i)=\phi(v_n-v_j)$. Then
$\phi(v_i-v_j)=\phi(v_i-v_n+v_n-v_j)=\phi(v_i-v_n)-\phi(v_j-v_n)=0$, a
contradiction. Thus, $\phi(v_n-v_i)\neq\phi(v_n-v_j)$ for all
distinct $i,j\in[n-1]$.  That is, $|\{\phi(v_n-v_i):i\in[n-1]\}|=n-1$.
The proposition follows from $n-1=|\{\phi(v_n-v_i):
i\in[n-1]\}|\le|\ZZ^d/\cL|-1$
\end{proof}

\begin{remark}\label{r:Blichfeldt}
Since $|\ZZ^n/\cL|=\det(\cL)$, Lemma~\ref{l:DiscreteBlichfeldt}
can be seen as a discrete analogon of Blichfeldt's
theorem~\cite[Theorem~2.4.1]{Loera2013}.
\end{remark}

\begin{thm}\label{t:CompleteGraphs}
For any $n\in\NN$, $\fdim{K_n}=\lceil\log_2n\rceil$.
\end{thm}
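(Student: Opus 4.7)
The upper bound is essentially free from what has already been proved. Note that $K_n$ is the complete multipartite graph $K_{1,1,\ldots,1}$ with $n$ parts, each of size $1$. Plugging this into Proposition~\ref{p:CompleteRpartiteGraphs} gives $\fdim{K_n}\le\lceil\log_2 n\rceil+\lceil\log_2 1\rceil=\lceil\log_2 n\rceil$, which is the desired upper bound. So the whole content of the theorem is the matching lower bound.

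For the lower bound, the plan is to show that if $K_n\cong\fiberGraph{P}{\cM}$ for a lattice polytope $P\subset\QQ^d$ and a set of moves $\cM\subset\ZZ^d$, then $n\le 2^d$; this forces $d\ge\lceil\log_2 n\rceil$. The idea is to apply Lemma~\ref{l:DiscreteBlichfeldt} to the sublattice $\cL:=2\ZZ^d$, whose index in $\ZZ^d$ is exactly $2^d$. What must be verified is that the reduction map $\pi:P\cap\ZZ^d\to\ZZ^d/2\ZZ^d$ is injective, equivalently that no two distinct integer points of $P$ differ by an element of $2\ZZ^d$.

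Suppose for contradiction that distinct $v,w\in P\cap\ZZ^d$ satisfy $v-w=2u$ for some $u\in\ZZ^d\setminus\{0\}$. Since the fiber graph is $K_n$, every pair of distinct integer points of $P$ is adjacent, so in particular $v-w=2u\in\cM$. Now consider the midpoint $m:=(v+w)/2$. Because $v\equiv w\pmod 2$ this midpoint lies in $\ZZ^d$, and by convexity of $P$ it lies in $P$; moreover $m\neq v$ and $m\neq w$ because $v\neq w$. Therefore $m\in P\cap\ZZ^d$ is a third node, adjacent in $K_n$ to both $v$ and $w$, whence $v-m=u\in\cM$. But then $\cM$ contains both $u$ and $2u$, contradicting the defining property of a set of moves. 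This establishes injectivity of $\pi$ and completes the lower bound via Lemma~\ref{l:DiscreteBlichfeldt}.

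I do not expect any serious obstacle here: the upper bound is already packaged, and the midpoint trick for the lower bound is a clean one-line application of convexity once one notices that the \emph{completeness} of $K_n$ is precisely what forces every pair, including $(v,m)$, into $\cM$. The only subtlety to double-check is the set-of-moves axiom, namely that it rules out the simultaneous presence of $u$ and $2u$; this is exactly the condition stated in the paragraph before Definition~\ref{d:FiberGraph}.
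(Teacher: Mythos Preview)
Your proof is correct and follows essentially the same route as the paper: the upper bound comes from Proposition~\ref{p:CompleteRpartiteGraphs} applied to $K_n=K_{1,\ldots,1}$, and the lower bound uses the midpoint argument (if $v-w\in 2\ZZ^d$ then the midpoint is an integer point of $P$, forcing both $u$ and $2u$ into $\cM$) together with Lemma~\ref{l:DiscreteBlichfeldt} for the lattice $2\ZZ^d$. Your write-up is in fact slightly cleaner in invoking convexity of $P$ for the midpoint, where the paper somewhat loosely says ``since $P$ is normal.''
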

\begin{proof}
Due to Proposition~\ref{p:CompleteRpartiteGraphs}, the fiber dimension
of $K_n$ is bounded from above by $\lceil\log_2n\rceil$. Let
$d:=\fdim{K_n}$ and $P\subset\QQ^d$ a $d$-dimensional polytope and
$\cM\subset\ZZ^d$ a Markov basis such that
$K_n\cong\fiberGraph{P}{\cM}$. Assume there are $v,w\in P\cap\ZZ^d$
such that $v-w\in 2\cdot\ZZ^d$. Since $(v+w)_i$ is even for all
$i\in[d]$, $(v+w)_i$ is even for all $i\in[d]$ and thus $v+w\in
2\ZZ^d$. In particular, then $\frac{1}{2}(v+w)\in\ZZ^d$ and
since $P$ is normal, $\frac{1}{2}(v+w)\in P\cap\ZZ^d$. This, however,
implies that $v-w\in\cM$ and $\frac{1}{2}(v-w)\in\cM$. Thus,
$v-w\not\in 2\ZZ^d$. Due to Lemma~\ref{l:DiscreteBlichfeldt},
$n=|P\cap\ZZ^d|\le 2^d$ and thus $d\ge\lceil\log_2 n\rceil$.
\end{proof}

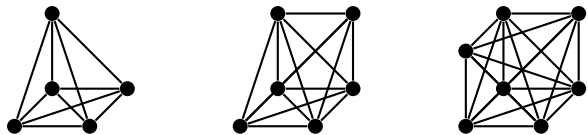
\begin{figure}[h!]
	\begin{tikzpicture}[xscale=0.5,yscale=0.5]

		\node [fill, circle, inner sep=2pt](a1) at (-1,-1) {};
		\node [fill, circle, inner sep=2pt](a2) at (1,-1) {};
		\node [fill, circle, inner sep=2pt](b1) at (0,0) {};
		\node [fill, circle, inner sep=2pt](b2) at (2,0) {};
		\node [fill, circle, inner sep=2pt](b3) at (0,2) {};

      \draw[thick](a1) -- (a2);
      \draw[thick](a1) -- (b1);
      \draw[thick](a1) -- (b2);
      \draw[thick](a1) -- (b3);
      \draw[thick](a2) -- (b1);
      \draw[thick](a2) -- (b2);
      \draw[thick](a2) -- (b3);
      \draw[thick](b1) -- (b2);
      \draw[thick](b3) -- (b2);
      \draw[thick](b3) -- (b1);

		\node [fill, circle, inner sep=2pt](d1) at (6,0) {};
		\node [fill, circle, inner sep=2pt](d2) at (8,0) {};
		\node [fill, circle, inner sep=2pt](d3) at (6,2) {};
		\node [fill, circle, inner sep=2pt](d4) at (8,2) {};
		\node [fill, circle, inner sep=2pt](d5) at (5,-1) {};
		\node [fill, circle, inner sep=2pt](d6) at (7,-1) {};

     \draw[thick](d1) --(d2); 
     \draw[thick](d1) --(d3); 
     \draw[thick](d1) --(d4); 
     \draw[thick](d1) --(d5); 
     \draw[thick](d1) --(d6); 
       
     \draw[thick](d2) --(d3); 
     \draw[thick](d2) --(d4); 
     \draw[thick](d2) --(d5); 
     \draw[thick](d2) --(d6); 

     \draw[thick](d3) --(d4); 
     \draw[thick](d3) --(d5); 
     \draw[thick](d3) --(d6); 

     \draw[thick](d4) --(d5); 
     \draw[thick](d4) --(d6); 

     \draw[thick](d5) --(d6); 

		\node [fill, circle, inner sep=2pt](e1) at (12,0) {};
		\node [fill, circle, inner sep=2pt](e2) at (14,0) {};
		\node [fill, circle, inner sep=2pt](e3) at (12,2) {};
		\node [fill, circle, inner sep=2pt](e4) at (14,2) {};
		\node [fill, circle, inner sep=2pt](e5) at (11,-1) {};
		\node [fill, circle, inner sep=2pt](e6) at (13,-1) {};
		\node [fill, circle, inner sep=2pt](e7) at (11,1) {};

     \draw[thick](e1) --(e2); 
     \draw[thick](e1) --(e3); 
     \draw[thick](e1) --(e4); 
     \draw[thick](e1) --(e5); 
     \draw[thick](e1) --(e6); 
     \draw[thick](e1) --(e7); 

     \draw[thick](e2) --(e3); 
     \draw[thick](e2) --(e4); 
     \draw[thick](e2) --(e5); 
     \draw[thick](e2) --(e6); 
     \draw[thick](e2) --(e7); 

     \draw[thick](e3) --(e4); 
     \draw[thick](e3) --(e5); 
     \draw[thick](e3) --(e6); 
     \draw[thick](e3) --(e7); 

     \draw[thick](e4) --(e5); 
     \draw[thick](e4) --(e6); 
     \draw[thick](e4) --(e7); 

     \draw[thick](e5) --(e6); 
     \draw[thick](e5) --(e7); 

     \draw[thick](e6) --(e7); 

	\end{tikzpicture}
   \caption{\label{f:CompleteGraph}Fiber graph embeddings of $K_5,K_6$, and
   $K_7$ in $\QQ^3$.}
\end{figure}

\begin{remark}\label{r:CompleteGraphs}
Proposition~\ref{p:Apex} yields the trivial upper bound for complete
graphs, that is $\fdim{K_n}\le n-1$ for
$n\ge 2$. This bound is strict for the first time for $n=4$.
\end{remark}

\section{Distinct pair-sum polytopes}
For the remainder, we investigate an universal upper
bound on the fiber dimension by generalizing our embedding in
Proposition~\ref{p:EveryGraphIsFiberGraph} into the simplex. A priori,
a move in a Markov basis give rise to distinguished edges in a fiber
graph. The next definition states a property of polytopes assuring
that Markov moves lead to precisely one edge in the graph.

\begin{defn}\label{d:DPSPoly}
A lattice polytope $P\subset\QQ^d$ with $n:=|P\cap\ZZ^d|$ is a
\emph{distinct pair-sum polytope} if
$|P\cap\ZZ^d+P\cap\ZZ^d|=\binom{n}{2}+n$.
\end{defn}

\begin{remark}
Let $P\subset\QQ^d$ be a distinct pair-sum polytope and write
$P\cap\ZZ^d=\{v_1,\dots,v_n\}$, then all the possible sums
$2v_1,\dots,2v_n,v_1+v_2,v_1+v_3,\dots,v_{n-1}+v_n$ are pairwise distinct. We
refer to~\cite{Choi2002,Curcic2013} for more on distinct pair-sum polytopes.
\end{remark}

The next proposition states that distinct pair-sum polytopes allow
embeddings of all possible graphs whose number of nodes equals the
number of lattice points of the polytope.

\begin{prop}\label{p:DPSBound}
Let $P\subset\QQ^d$ be a distinct pair-sum polytope with
$n:=|P\cap\ZZ^d|$. For any graph $G$ on $n$ nodes, there exists a set
of moves $\cM\subset\ZZ^d$ such that $G\cong P(\cM)$. 
\end{prop}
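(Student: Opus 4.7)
The plan is to realize $G$ as a fiber graph on $P$ itself, encoding the edges of $G$ directly as differences of lattice points. Fixing any bijection between $\vertices{G}$ and $P\cap\ZZ^d=\{v_1,\ldots,v_n\}$, I would set
\[
\cM := \{v_i - v_j : \{v_i,v_j\} \in \edges{G}\},
\]
so that $\cM=-\cM$ and $0\notin\cM$ are automatic. The remaining content then splits into (i) verifying that $\cM$ is a set of moves in the sense of Section~2, and (ii) verifying that the chosen bijection is a graph isomorphism $G\to\fiberGraph{P}{\cM}$.

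Step (ii) is where the DPS hypothesis enters most cleanly. If $v_i-v_j=v_k-v_l$ with $v_i\ne v_j$, then $v_i+v_l=v_j+v_k$, and the distinct pair-sum property forces $\{i,l\}=\{j,k\}$ as multisets, which together with $v_i\ne v_j$ gives $(i,j)=(k,l)$. So each nonzero difference comes from a unique unordered pair, and the edges of $\fiberGraph{P}{\cM}$ restrict exactly to $\edges{G}$.

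The main obstacle is the no-multiples condition in (i): I must rule out $\lambda(v_a-v_b)=v_c-v_d$ for any integer $\lambda\ge 2$ and any two edges $\{v_a,v_b\},\{v_c,v_d\}$ of $G$. The strategy is to use convexity of $P$ to manufacture extra lattice points on the segment $[v_d,v_c]$ and then apply DPS to them. The point $w:=v_d+(v_a-v_b)=(1-\tfrac{1}{\lambda})v_d+\tfrac{1}{\lambda}v_c$ lies in $P\cap\ZZ^d$, and the identity $w+v_b=v_a+v_d$ combined with DPS yields $\{w,v_b\}=\{v_a,v_d\}$; since $v_a\ne v_b$, one is forced into $v_b=v_d$ and $w=v_a$. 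Substituting $v_d=v_b$ gives $v_c=\lambda v_a+(1-\lambda)v_b$, so the second lattice point $w':=2v_a-v_b=v_b+2(v_a-v_b)$ again lies in $[v_d,v_c]\cap\ZZ^d\subset P\cap\ZZ^d$; but then $w'+v_b=v_a+v_a$, so DPS forces $v_a=v_b$, contradicting $\{v_a,v_b\}\in\edges{G}$. This iterated use of DPS along a chain of collinear lattice points is the only nontrivial step; (ii) and the remainder of (i) are direct bookkeeping.
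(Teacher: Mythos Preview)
Your proof is correct and follows the same approach as the paper: fix a bijection, take $\cM$ to be the set of edge-differences, and use the distinct pair-sum property both to rule out multiples in $\cM$ and to show that differences determine their endpoints uniquely. The only divergence is in step~(i): the paper dispatches the no-multiples condition in a single stroke by observing that if $m=\lambda m'$ with $\lambda\ge 2$ and $v-w=m$, then $w,\ w+m',\ w+2m'$ are three distinct lattice points of $P$ satisfying $(w+m')+(w+m')=w+(w+2m')$, which already contradicts DPS---so your first application (forcing $v_b=v_d$) is an unnecessary detour before you arrive at exactly this identity in your second application.
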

\begin{proof}
Pick an arbitrary bijection $\phi:\vertices{G}\to
P\cap\ZZ^n$ and define
$$\cM:=\{\phi(u)-\phi(v): u\text{ and }v\text{ adjacent in }G\}.$$ 
We claim that $G\cong\fiberGraph{P}{\cM}$.  First, we need to show
that $\cM$ does not contain multiples. Assume, there are $m,m'\in\cM$
and $k\in\NN$ with $k\ge 2$ such that $m=km'$.  Let $v,w\in
P\cap\ZZ^d$ with $v-w=m$.  Then $w+km'=v$. Then $w,w+m',w+2m'\in
P\cap\ZZ^d$ are disjoint elements that fulfill
$(w+m')+(w+m')=w+(w+2m')$, that is, $|P\cap\ZZ^d+P\cap\ZZ^d|<n(n-1)+1$
since two different sums lead to the same element. Clearly, every edge
in $G$ get mapped to an edge in $P(\cM)$. Conversely, let $v,w\in
P\cap\ZZ^d$ such that $v-w\in\cM$.  Then there exists adjacent nodes
$v',w'\in\vertices{G}$ with $\phi(v')-\phi(w')=v-w$.  We have to prove
that $\phi(v')=v$ and $\phi(w')=w$. If not, then
$\phi(v')+w=\phi(w')+v$ implies that two different sums yield the same
element in $P\cap\ZZ^d+P\cap\ZZ^d$ which again gives a contraction.
\end{proof}

In~\cite{Choi2002}, a distinct pair-sum polytope in $\QQ^n$ on
$2^n$ lattice points was constructed for any $n\in\NN$. This gives
rise to the following result.

\begin{prop}\label{p:LogDPS}
Let $G$ be a graph on $2^n$ nodes, then $\fdim{G}\le n$.
\end{prop}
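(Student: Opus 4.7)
The plan is to combine the two results that immediately precede the statement. By \cite{Choi2002}, there exists, for every $n\in\NN$, a distinct pair-sum polytope $P\subset\QQ^n$ whose set of lattice points has cardinality $2^n$. I would cite this construction verbatim, since its explicit form is not needed for the argument; only the count $|P\cap\ZZ^n|=2^n$ and the pair-sum property matter.

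Given a graph $G$ on $2^n$ vertices, I would apply Proposition~\ref{p:DPSBound} to this polytope $P$: since $|P\cap\ZZ^n|=2^n=|\vertices{G}|$, there exists a set of moves $\cM\subset\ZZ^n$ with $G\cong\fiberGraph{P}{\cM}$. At this point, $G$ is realized as a fiber graph in the ambient space $\QQ^n$, but $P$ is not necessarily full-dimensional, so this does not yet give the bound on $\fdim{G}$ directly.

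To close that gap, I would invoke Lemma~\ref{l:FullDim}, which replaces $P$ by a lattice polytope $P'\subset\QQ^{\dim(P)}$ of the same dimension together with a set of moves $\cM'\subset\ZZ^{\dim(P)}$ so that $\fiberGraph{P}{\cM}\cong\fiberGraph{P'}{\cM'}$. A translation moves $P'$ into $\QQ_{\ge 0}^{\dim(P)}$ without altering the graph structure, matching the conventions of Definition~\ref{d:FiberDimension}. Since $\dim(P)\le n$, this shows $\fdim{G}\le\dim(P)\le n$.

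The only genuine obstacle is the appeal to \cite{Choi2002}; everything else is a bookkeeping composition of Proposition~\ref{p:DPSBound} and Lemma~\ref{l:FullDim}. In particular, no new combinatorial construction is needed beyond what has already been developed in the paper.
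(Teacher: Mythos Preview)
Your proof is correct and follows the same approach as the paper's one-line argument, which simply cites \cite[Theorem~3]{Choi2002} together with Proposition~\ref{p:DPSBound}. Your additional invocation of Lemma~\ref{l:FullDim} to guarantee full-dimensionality is a careful touch that the paper leaves implicit.
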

\begin{proof}
This is~\cite[Theorem~3]{Choi2002} together with
Proposition~\ref{p:DPSBound}.
\end{proof}

Lower bounds on the fiber dimension can be translated to relations
between the number of lattice points and the dimension of the ambient
space of distinct pair-sum polytopes. The next proposition
demonstrates this for complete graphs and rediscovers a bound which
was already proven in~\cite[Theorem~2]{Choi2002}. 

\begin{prop}\label{p:RelationDPS}
Let $P\subset\QQ^d$ be a distinct pair-sum polytope, then
$|P\cap\QQ^d|\le 2^d$.
\end{prop}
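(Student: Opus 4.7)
The plan is to reduce this to Theorem~\ref{t:CompleteGraphs} via the embedding machinery of Proposition~\ref{p:DPSBound}. Set $n := |P \cap \ZZ^d|$ (reading the statement with $|P\cap\ZZ^d|$ in place of $|P\cap\QQ^d|$, since the latter is infinite unless $\dim P = 0$). The goal is to show $n \le 2^d$.

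First, I apply Proposition~\ref{p:DPSBound} with $G = K_n$: since $P$ is a distinct pair-sum polytope with exactly $n$ lattice points, there exists a set of moves $\cM \subset \ZZ^d$ such that $K_n \cong \fiberGraph{P}{\cM}$. This already exhibits $K_n$ as a fiber graph inside $\QQ^d$, but $P$ need not be full-dimensional, so I invoke Lemma~\ref{l:FullDim} to replace $(P,\cM)$ with an isomorphic fiber graph in a full-dimensional polytope of dimension $\dim(P) \le d$. By Definition~\ref{d:FiberDimension}, this gives $\fdim{K_n} \le d$.

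Second, I combine this with Theorem~\ref{t:CompleteGraphs}, which computes $\fdim{K_n} = \lceil \log_2 n \rceil$. Chaining the two inequalities yields $\lceil \log_2 n \rceil \le d$, and exponentiating gives $n \le 2^d$, as required.

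There is no real obstacle here; the argument is a one-line composition of previously established results, and the only thing to be careful about is making sure that the distinct pair-sum hypothesis on $P$ is exactly what Proposition~\ref{p:DPSBound} consumes, and that passage from $P \subset \QQ^d$ (not a priori full-dimensional) to a full-dimensional representative via Lemma~\ref{l:FullDim} preserves the ambient dimension bound $\dim(P) \le d$. Both are immediate from the cited statements.
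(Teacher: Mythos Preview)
Your proof is correct and matches the paper's own argument essentially line for line: apply Proposition~\ref{p:DPSBound} to embed $K_n$ as a fiber graph in $P$, then invoke Theorem~\ref{t:CompleteGraphs} to obtain $\lceil\log_2 n\rceil = \fdim{K_n} \le d$. Your extra care in noting the $\ZZ^d$ versus $\QQ^d$ typo and in explicitly invoking Lemma~\ref{l:FullDim} for full-dimensionality is a nice touch the paper leaves implicit.
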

\begin{proof}
Let $n:=|P\cap\QQ^d|$. According to Proposition~\ref{p:DPSBound},
there exists a Markov basis $\cM\subset\ZZ^d$ such that $K_n\cong
P(\cM)$. By the definition of the fiber dimension and
Theorem~\ref{t:CompleteGraphs}, $\lceil\log_2 n \rceil=\fdim{K_n}\le
d$, i.e., $n\le 2^d$.
\end{proof}

\begin{remark}\label{r:SmallestDPSEmbedding}
For any $n\in\NN$, there exists a distinct pair-sum
polytope on $n$ lattice points, for example, take the $(n-1)$
dimensional simplex in $\QQ^{n-1}$. Thus, for fixed $n\in\NN$, we can
ask for the smallest natural number $d\in\NN$ such that there exists a
distinct pair-sum polytope $P\subset\QQ^d$ on $n$ lattice points. Then
$d\le n-1$ and Proposition~\ref{p:RelationDPS} on the other hand gives
$\lceil\log_2n\rceil\le d$. Given such a minimal $d$,
Proposition~\ref{p:DPSBound} implies the fiber dimension of any graph
on $n$ nodes is bounded from above by $d$. However, the embedding of
graphs into distinct pair-sum polytopes is far from optimal. 
For instance, a path has fiber dimension $1$ and thus this bound can
be made arbitrarily bad.  However, we think its an interesting
question for which class beside complete graphs this bound is
(asymptotically) tight.
\end{remark}

\bibliographystyle{amsplain}
\bibliography{fiberDimension.bib}

\end{document}